\documentclass[11pt,a4paper,final]{amsart}
\usepackage[cp1250]{inputenc}
\usepackage[T1]{fontenc}
\usepackage{amsfonts}
\usepackage{amsmath}
\usepackage{amssymb}
\usepackage{amsthm}
\usepackage{mathrsfs}
\usepackage{amsopn}
\usepackage{indentfirst}
\usepackage{enumerate}
\usepackage{graphicx}
\usepackage{hyperref}

\hypersetup{
  colorlinks=true,
  linkcolor=blue,
  citecolor=blue,
  urlcolor=blue,
  pdftitle={A Note on the Classification of Centroaffine Hypersurfaces}
}

\newtheoremstyle{definition}{8pt}{}{}{0cm}{\bf}{.\ }{0pt}{}
\numberwithin{equation}{section}
\newtheorem{theorem}{Theorem}[section]
\newtheorem{lemma}[theorem]{Lemma}
\newtheorem{proposition}[theorem]{Proposition}

\theoremstyle{definition}
\newtheorem{definition}[theorem]{Definition}

\newcommand{\R}{\mathbb{R}}

\newcommand{\X}{\mathcal{X}}

\newcommand{\const}{\operatorname{const}}

\newcommand{\id}{\operatorname{id}}
\newcommand{\Span}{\operatorname{span}}

\newcommand{\tr}{\operatorname{tr}}

\newcommand{\cA}{\mathcal{A}}
\newcommand{\epsc}{\widetilde\varepsilon}

\title{A Note on the Classification of Centroaffine Hypersurfaces}

\author[Z. Szancer]{Zuzanna Szancer}
\address{Zuzanna Szancer, Department of Applied Mathematics, University of Agriculture in Krakow, 253 Balicka St., 30-198 Krakow, Poland}
\email{Zuzanna.Szancer@urk.edu.pl}

\subjclass[2020]{53A15, 53B25}
\keywords{Centroaffine hypersurface, Tchebychev vector field, difference tensor, affine hypersphere, warped product}

\begin{document}

\begin{abstract}
Cao and Wang recently classified a class of locally strongly convex centroaffine hypersurfaces whose non-vanishing Tchebychev vector field $T$ satisfies
$K(T,X)=\alpha X$ and $X(\alpha)=0$ for every $X\perp T$, together with $K(T,T)=\beta T$.
In the degenerate case $\beta=2\alpha$, their higher-dimensional argument assumes in addition that the $T^\perp$-component of the difference tensor $K$ vanishes identically.
We show that this extra assumption is unnecessary and the theorem can be strengthened.
\end{abstract}

\maketitle

\section{Introduction}

Let $x\colon M^n\to\R^{n+1}$ be a locally strongly convex centroaffine hypersurface, $n\ge3$, with positive definite centroaffine metric $h$, induced connection $\nabla$, Levi-Civita connection $\widehat\nabla$, and difference tensor
$$
K(X,Y)=\nabla_XY-\widehat\nabla_XY.
$$
The Tchebychev vector field is denoted by $T$.
Cao and Wang \cite{CaoWang2026} considered hypersurfaces with $T\neq0$ satisfying
\begin{equation}\label{eq::main-condition}
K(T,X)=\alpha X, \qquad X(\alpha)=0 \quad\text{for every }X\perp T,
\end{equation}
and
\begin{equation}\label{eq::betaCondition}
K(T,T)=\beta T,
\end{equation}
where $\alpha$ and $\beta$ are smooth functions.
\par Their main result shows that, under some additional assumptions, $(M,h)$ is locally isometric to a warped product of an interval and an $(n-1)$-manifold, and that the immersion can be written in terms of a curve and a proper or improper affine hypersphere.
When $\beta = 2\alpha$ and $n\ge4$ Cao and Wang assume that the restriction of the difference tensor to the orthogonal distribution $T^\perp$ has vanishing $T^\perp$-component.  In the notation used below, this is the condition $\bar K=0$. The purpose of this paper is to show that this condition is not needed.
Thus we can state a strengthened version of the classification theorem as follows.
\begin{theorem}[Strengthened Cao--Wang classification]\label{tw::main}
Let $x\colon M^n\to\R^{n+1}$, $n\ge3$, be a locally strongly convex centroaffine hypersurface with non-vanishing Tchebychev vector field $T$ satisfying \eqref{eq::main-condition} and \eqref{eq::betaCondition}.  Then $(M,h)$ is locally isometric to a warped product
$$
I\times_\rho N.
$$
Moreover, after a centroaffine transformation, the immersion is locally of the form
\begin{equation}\label{eq::explicit-immersion-intro}
x(t,p)=\gamma_1(t)e+\gamma_2(t)g(p),\qquad \gamma_1=\kappa^{-1}\phi, \quad \gamma_2=\kappa^{-1},
\end{equation}
where $e=(0,\ldots,0,1)$ and $g$ is either a proper affine hypersphere in $\R^n\times\{0\}$ or an improper affine hypersphere in $\{1\}\times\R^n$.
The functions $\kappa,\tau,\phi$ satisfy
\begin{equation}\label{eq::ODE-intro}
\kappa'=-(b+\lambda)\kappa, \qquad \tau'=(\lambda+b-\mu)\tau, \qquad \phi'=\kappa\tau^{-1},
\end{equation}
where
$$
\lambda=\alpha\|T\|^{-1},\qquad \mu=\beta\|T\|^{-1},
$$
and
\begin{equation}\label{eq::ODE-lambda-b-intro}
\lambda'=(\mu-2\lambda)b, \qquad b'=-b^2-\epsc+\mu\lambda-\lambda^2.
\end{equation}
In particular, no restriction on $\bar K$ is required when $\beta=2\alpha$ and $n\ge 4$.
\end{theorem}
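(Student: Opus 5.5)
The plan is to reduce everything to the case already settled by Cao and Wang: we show that the hypothesis $\bar K=0$ used in the degenerate case $\beta=2\alpha$, $n\ge4$, follows from the other assumptions, or is not needed for the warped product and ODE part of their argument. We work with the unit field $E_1=T/\|T\|$ and an orthonormal frame $E_2,\dots,E_n$ of $T^\perp$. On $T^\perp$ we split $K(X,Y)=h(X,Y)\lambda E_1+\bar K(X,Y)$ with $\bar K(X,Y)\in T^\perp$. Here \eqref{eq::main-condition} and \eqref{eq::betaCondition} give $K(E_1,E_1)=\mu E_1$ and $K(E_1,X)=\lambda X$. The apolarity condition $\tr K_X=n\,h(T,X)$ ties $\mu+(n-1)\lambda$ to $\|T\|$, and it also makes $\bar K$ trace-free on $T^\perp$.

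The first step is to get the centroaffine Codazzi equation for $K$ from the Gauss equation $R=\epsc(h\wedge h)+[K,K]$ with $\widehat\nabla K$ totally symmetric. We then evaluate $(\widehat\nabla_X K)(E_1,Y)=(\widehat\nabla_{E_1}K)(X,Y)$ and $(\widehat\nabla_X K)(Y,E_1)=(\widehat\nabla_Y K)(X,E_1)$ for $X,Y\perp T$. Projecting onto $E_1$ and onto $T^\perp$, and using $X(\lambda)=0$, gives the following:
\begin{enumerate}[(a)]
\item $\widehat\nabla_X E_1=b\,X$ for a function $b$, so $T^\perp$ is integrable and umbilical;
\item $\widehat\nabla_{E_1}E_1=0$;
\item $X(b)=0$ and $X(\mu)=0$.
\end{enumerate}
The key point to verify is that these identities involve $\bar K$ only through terms of the form $(\mu-2\lambda)\bar K(X,Y)$ or through $\tr\bar K=0$. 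When $\mu\ne2\lambda$ this is Cao--Wang's route. When $\mu=2\lambda$ we must check that the coefficient of $\bar K$ in the equations yielding (a)--(c) vanishes, so that (a)--(c) hold regardless of $\bar K$.

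From (a)--(c) we get that $E_1$ is a geodesic field with umbilical, spherical orthogonal leaves. Hiepko's/de Rham-type criterion then gives the local warped product $I\times_\rho N$ with $\rho'/\rho=b$.

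Next we derive the ODEs \eqref{eq::ODE-lambda-b-intro}. We take the $E_1$-component of the Codazzi equation, $(\widehat\nabla_{E_1}K)(X,X)=(\widehat\nabla_X K)(E_1,X)$, which gives $\lambda'=(\mu-2\lambda)b$. The Gauss equation on the plane $E_1\wedge X$, with the warped product curvature $-(b'+b^2)$, gives $b'=-b^2-\epsc+\mu\lambda-\lambda^2$. Neither computation sees $\bar K$, because $K(E_1,\cdot)$ has no $\bar K$-part.

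For the immersion, we set $\kappa,\tau,\phi$ by \eqref{eq::ODE-intro} and compute $\partial_t$ of $\kappa x+\ldots$ using $D_{E_1}x_*E_1=x_*(\mu E_1)-x$ and $D_Xx_*E_1=x_*((b+\lambda)X)$. This shows that a suitable combination $\kappa x-\phi\,e'$ is constant along $t$, and that the vector $e=x_*E_1\cdot(\ldots)+\ldots$ is constant on $M$. The restriction $g$ of $\kappa x$ to a leaf then has affine normal proportional to $g$ (proper case) or constant (improper case). This follows because the leaf's induced difference tensor is exactly $\bar K$, and apolarity of $\bar K$ is automatic from $\tr\bar K=0$. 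So $g$ is an affine hypersphere with cubic form $\bar K$, which can be arbitrary. This is why no restriction on $\bar K$ arises.

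The main obstacle is the first step in the degenerate case. We must show that the $T^\perp$-projection of the Codazzi identity with $\mu=2\lambda$ does not force a relation like $b\,\bar K=0$, and that it never needs $\bar K=0$ to conclude $X(\mu)=0$ and the umbilicity. I would first write out $(\widehat\nabla_X K)(Y,Z)-(\widehat\nabla_Y K)(X,Z)$ with $Z\perp T$ in the frame, and isolate the $\bar K$ terms. I expect these to combine into $(\mu-2\lambda)\bigl(h(X,\cdot)\bar K(Y,\cdot)\bigr)$-type expressions, which vanish identically when $\mu=2\lambda$. The remaining $\bar K$ terms should just be the Codazzi equation of the leaves, which is automatically consistent.
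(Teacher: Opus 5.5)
There is a genuine gap at your step (a), and the degeneracy sits elsewhere than you expect. The factor $(\mu-2\lambda)$ multiplies the shape operator $\cA X=-\widehat\nabla_Xe_1$ of the leaves, not $\bar K$. Using $\widehat\nabla_{e_1}e_1=0$, the $e_1$-component of $(\widehat\nabla_XK)(e_1,Y)=(\widehat\nabla_{e_1}K)(X,Y)$ for $X,Y\in T_2$ is
\[
(2\lambda-\mu)\,h(\cA X,Y)=e_1(\lambda)\,h(X,Y).
\]
This is the same identity you use later to get $\lambda'=(\mu-2\lambda)b$. For $\mu\neq2\lambda$ it forces $\cA$ to be scalar. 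For $\mu=2\lambda$ it collapses to $e_1(\lambda)=0$ and says nothing about $\cA$. So umbilicity in the degenerate case does not come from a vanishing coefficient; it is the whole new content of the theorem.

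Nor are the remaining tangential Codazzi equations \emph{automatically consistent}. Their $e_1$-part gives $[\cA,\bar K_Z]=0$. Their $T_2$-part gives
\[
(\bar\nabla_X\bar K)(Y,Z)-(\bar\nabla_Y\bar K)(X,Z)=\lambda\bigl(h(Y,Z)\cA X-h(X,Z)\cA Y+h(\cA X,Z)Y-h(\cA Y,Z)X\bigr),
\]
which becomes the Codazzi equation of the leaf only once $\cA$ is known to be scalar. If $\bar K=0$, contracting it gives $(n-1)\cA=(\tr\cA)\id$ at once. That is exactly why Cao and Wang needed that hypothesis.

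Without $\bar K=0$, no pointwise argument works for $n\ge4$. Suppose $\cA=a_1\id$ on $V_1$ and $a_2\id$ on $V_1^\perp$, with $a_1\ne a_2$ and $\dim V_1\ge2$. Then any nonzero trace-free totally symmetric cubic form on $V_1$, extended by zero, satisfies symmetry, trace-freeness and $[\cA,\bar K_Z]=0$. You therefore need a differential argument built on the contracted identity
\[
\sum_{i=2}^n(\bar\nabla_{e_i}\bar K)(e_i,X)=-\lambda\bigl((n-1)\cA-(\tr\cA)\id\bigr)X .
\]

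The paper does this as follows.
\begin{enumerate}[(i)]
\item Near a point where $\cA$ is not scalar, take the smooth spectral projection $P$ of $\cA$ onto the eigenvalues above the mean $\tr\cA/(n-1)$.
\item $P$ commutes with every $\bar K_Z$, so trace-freeness and total symmetry give $\tr(P\bar K_Z)\equiv0$.
\item Differentiate and contract. The $\bar\nabla P$-term is traceless, because it interchanges $\operatorname{im}P$ and $\ker P$, both of which $\bar K_Z$ preserves.
\item What remains is $\lambda\tr\bigl(P((n-1)\cA-(\tr\cA)\id)\bigr)=0$, which contradicts the strict positivity of this trace.
\end{enumerate}
Only then does $\widehat\nabla_Xe_1=bX$ follow. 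The condition $X(b)=0$ then comes from the Gauss equation: compare $\widehat R(X,Y)e_1=-[K_X,K_Y]e_1=0$ with $\widehat R(X,Y)e_1=X(b)Y-Y(b)X$.

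Your proposal has no substitute for the umbilicity step. As written, it does not establish the warped product when $\beta=2\alpha$, $n\ge4$, which is precisely the case at issue. Once umbilicity is available, your ODE and immersion steps go through as in Cao--Wang.
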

\par The paper is organized as follows.  Section~\ref{sec::prelim} fixes the notation and collects in one lemma the core identities already obtained by Cao and Wang in the case $\beta=2\alpha$.  Section~\ref{sec::degenerate} contains main results of the paper. It proves  strengthened version of Proposition~4, and then completes the proof of the classification Theorem \ref{tw::main}.

\section{Preliminaries}\label{sec::prelim}
We use the conventions of \cite{CaoWang2026} as well as \cite{NomSas}.
Let $x\colon M^n\to\R^{n+1}$ be a locally strongly convex centroaffine hypersurface, $n\ge3$, with positive definite centroaffine metric $h$ and induced connection $\nabla$.
Let $\widehat\nabla$ be the Levi-Civita connection for $h$.
We have the centroaffine Gauss formula
\begin{equation}\label{eq::centro-gauss-formula}
D_Xx_*Y=x_*(\nabla_XY)- h(X,Y)\epsc x, \qquad \epsc\in\{\pm1\}
\end{equation}
and define a difference tensor $K$ by the formula
$$
K(X,Y)=\nabla_XY-\widehat\nabla_XY.
$$
The standard centroaffine cubic form is defined by the formula
\begin{equation}\label{eq::cubicForm}
C(X,Y,Z):=(\nabla_Xh)(Y,Z).
\end{equation}
$C$ is totally symmetric and is related to $K$ by

$$
C(X,Y,Z)=(\nabla_Xh)(Y,Z)=-2h(K(X,Y),Z),
$$
for all $X,Y,Z\in\X(M)$. Recall that Tchebychev vector field $T$ is defined by the formula
$$
T:=\frac{1}{n}\operatorname{tr}_h K.
$$
For a local $h$-orthonormal frame $e_1,\ldots,e_n$, this is equivalent to
$$
T=\frac{1}{n}\sum_{i=1}^n K(e_i,e_i).
$$
In this paper we always assume that $T$ is non-zero everywhere.
\par Let us define
$$
e_1:=\frac{T}{\|T\|},\qquad T_1:=\Span\{e_1\}, \qquad T_2:=T_1^\perp.
$$
The vector field $e_1$ is normalized $T$. Whenever $e_2,\ldots,e_n$ are used below, they denote an arbitrary local $h$-orthonormal frame of $T_2$, thus $\{e_1,e_2,\ldots,e_n\}$ is a local orthonormal frame of $TM$.
Set
\begin{equation}\label{eq::lambda-mu}
\mu=\beta\|T\|^{-1},\qquad\lambda=\alpha\|T\|^{-1}.
\end{equation}
Then
\begin{equation}\label{eq::basic-K}
K(e_1,e_1)=\mu e_1, \qquad K(e_1,X)=\lambda X, \qquad X\in T_2,
\end{equation}
and
\begin{equation}\label{eq::T-length}
\|T\|=\frac1n\bigl(\mu+(n-1)\lambda\bigr)\neq0.
\end{equation}

For $X,Y\in T_2$, total symmetry of \eqref{eq::cubicForm} gives
$$
h(K(X,Y),e_1)=h(K(e_1,X),Y)=\lambda h(X,Y).
$$
Therefore we have decomposition of $K$ as follows:
\begin{equation}\label{eq::Kdecomp}
K(X,Y)=\bar K(X,Y)+\lambda h(X,Y)e_1,\qquad X,Y\in T_2,
\end{equation}
where $\bar K$ is component of $T_2$.
For $Z\in T_2$ we denote
\begin{equation}\label{eq::barKZ}
\bar K_ZX:=\bar K(Z,X).
\end{equation}
We also define
\begin{equation}\label{eq::Adef}
\cA X:=-\widehat\nabla_Xe_1,\qquad X\in T_2.
\end{equation}
Since $h(e_1,e_1)=1$,
$$
h(\widehat\nabla_Xe_1,e_1)=0,
$$
so $\cA$ maps $T_2$ into $T_2$.  Moreover, for $X,Y\in T_2$,
$$
0=X(h(Y,e_1))=h(\widehat\nabla_XY,e_1)+h(Y,\widehat\nabla_Xe_1).
$$
Therefore
\begin{equation}\label{eq::A-second-form}
  h(\cA X,Y)=h(\widehat\nabla_XY,e_1),
  \qquad X,Y\in T_2.
\end{equation}
For two endomorphisms $S$ and $L$ of the same vector space, we write
$$
  [S,L]:=S\circ L-L\circ S.
$$
The following lemma contains results obtained by Cao and Wang in \cite{CaoWang2026} that are important for the proof of Theorem \ref{tw::main}.  The equation numbers quoted in its proof are the equation numbers of \cite{CaoWang2026}; they are written explicitly and do not depend on the numbering of the present paper.
\begin{lemma}[Cao--Wang, \cite{CaoWang2026}]\label{lm::CW}
Assume \eqref{eq::main-condition} and \eqref{eq::betaCondition}, with $T\neq0$, and suppose
$$
\beta=2\alpha.
$$
Then
\begin{equation}\label{eq::start-lambda}
\mu=2\lambda, \qquad \lambda=\const,\qquad \lambda>0,
\end{equation}
and
\begin{equation}\label{eq::geodesic}
\widehat\nabla_{e_1}e_1=0.
\end{equation}
The distribution $T_2$ is integrable.  Moreover, the tensor $\bar K$ defined in \eqref{eq::Kdecomp} satisfies
\begin{equation}\label{eq::barK-symmetry}
h(\bar K(X,Y),Z)=h(\bar K(Y,Z),X)=h(\bar K(Z,X),Y),
\qquad X,Y,Z\in T_2,
\end{equation}
and
\begin{equation}\label{eq::barK-tracefree}
\sum_{i=2}^n\bar K(e_i,e_i)=0
\end{equation}
for any local $h$-orthonormal frame $e_2,\ldots,e_n$ of $T_2$.  Finally, $\cA$ defined in \eqref{eq::Adef} is $h$-self-adjoint and
\begin{equation}\label{eq::A-K-commute}
[\cA,\bar K_Z]=0\qquad\text{for every }Z\in T_2.
\end{equation}
\end{lemma}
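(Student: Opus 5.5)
The plan is to derive each claim of the lemma from the centroaffine structure equations. These are the Codazzi equation for $K$, namely that $(\widehat\nabla_XK)(Y,Z)$ is totally symmetric, and the Gauss equation
$$
\widehat R(X,Y)Z=\epsc\bigl(h(Y,Z)X-h(X,Z)Y\bigr)-[K_X,K_Y]Z,
$$
both evaluated in the adapted frame $e_1,e_2,\ldots,e_n$. All of this is carried out in \cite{CaoWang2026}, so I will follow their computations and indicate their order.

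\emph{Algebraic part.} Setting $\beta=2\alpha$ gives $\mu=2\lambda$ immediately from \eqref{eq::lambda-mu}. The symmetry \eqref{eq::barK-symmetry} comes from the total symmetry of $C$ applied to \eqref{eq::Kdecomp}, since the $e_1$-term is symmetric. For \eqref{eq::barK-tracefree}, I trace \eqref{eq::Kdecomp} and use \eqref{eq::basic-K}:
$$
nT=\mu e_1+(n-1)\lambda e_1+\sum_{i\ge2}\bar K(e_i,e_i).
$$
Since $T$ is parallel to $e_1$, the $T_2$-part $\sum_{i\ge2}\bar K(e_i,e_i)$ must vanish. The same identity gives $\|T\|=(n+1)\lambda/n$, so $\lambda>0$.

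\emph{Differential part.} I apply the Codazzi equation to pairs of arguments and compare components.

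First take $(e_1,e_1,X)$ with $X\in T_2$, that is, equate $(\widehat\nabla_{e_1}K)(e_1,X)$ and $(\widehat\nabla_XK)(e_1,e_1)$. Using $X(\lambda)=0$, $\mu=2\lambda$ and \eqref{eq::basic-K}, the $e_1$-components give a relation involving $e_1(\lambda)$ and $X(\mu)$. The $T_2$-components give
$$
(\mu-2\lambda)\,\widehat\nabla_{e_1}e_1=0
$$
up to terms in $\bar K(\widehat\nabla_{e_1}e_1,X)$. This is exactly where the degeneracy $\mu=2\lambda$ enters, and the extra terms must be handled with further Codazzi components to obtain \eqref{eq::geodesic}.

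Next I use the Codazzi equation for $(X,Y,e_1)$ with $X,Y\in T_2$, written as
$$
(\widehat\nabla_XK)(Y,e_1)=(\widehat\nabla_YK)(X,e_1).
$$
Expanding $K_{e_1}=\lambda\,\id$ on $T_2$ together with $\widehat\nabla e_1=-\cA$, the $e_1$-component yields $h(\cA X,Y)=h(\cA Y,X)$. So $\cA$ is self-adjoint, and therefore $T_2$ is integrable by \eqref{eq::A-second-form}. The $T_2$-component, combined with the trace of the relation for $e_1(\lambda)$ obtained above, should force $e_1(\lambda)=0$, hence $\lambda=\const$.

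Finally, the Codazzi equation for $(e_1,Y,Z)$ with $Y,Z\in T_2$, projected to $T_2$, gives
$$
(\widehat\nabla_{e_1}\bar K)(Y,Z)-(\widehat\nabla_Y\bar K)(e_1,Z)=\ldots,
$$
and I expect the antisymmetric part in $Y,Z$ to yield $\bar K(\cA Y,Z)=\cA\bar K(Y,Z)$. This is \eqref{eq::A-K-commute}.

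\emph{Main obstacle.} The hard step is the bookkeeping in the mixed Codazzi components, in particular isolating $\widehat\nabla_{e_1}e_1$ and $e_1(\lambda)$ when $\mu-2\lambda=0$ removes the natural coefficient. If the Codazzi equation alone does not suffice, I would bring in the Gauss equation for $\widehat R(e_1,X)e_1$ to close the argument.
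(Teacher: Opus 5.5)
There is a genuine gap, and it sits exactly where $\mu=2\lambda$ enters. You derive both $\widehat\nabla_{e_1}e_1=0$ and the self-adjointness of $\cA$ from Codazzi components whose relevant coefficient is $\mu-2\lambda$, and that coefficient vanishes under the hypothesis of the lemma.

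Put $v:=\widehat\nabla_{e_1}e_1\in T_2$. Your first component actually reads
\[
e_1(\lambda)X-\bar K(v,X)+(\mu-2\lambda)h(X,v)e_1=X(\mu)e_1-(\mu-2\lambda)\cA X,\qquad X\in T_2 .
\]
So the $v$-term with coefficient $\mu-2\lambda$ lies in the $e_1$-component, which reduces to the already known $X(\mu)=0$. The $T_2$-component becomes $\bar K_v=e_1(\lambda)\id$ on $T_2$. Tracing this with \eqref{eq::barK-symmetry} and \eqref{eq::barK-tracefree} gives $e_1(\lambda)=0$ and $\bar K_v=0$, but nothing that forces $v=0$; for instance, the constraint is void wherever $\bar K=0$.

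Similarly, for $X,Y\in T_2$,
\[
(\widehat\nabla_XK)(Y,e_1)=X(\lambda)Y+\bar K(Y,\cA X)+(2\lambda-\mu)h(\cA X,Y)e_1 .
\]
Hence the $e_1$-component of your second equation is $(2\lambda-\mu)\bigl(h(\cA X,Y)-h(\cA Y,X)\bigr)=0$, which is empty. Consequently you obtain neither \eqref{eq::geodesic} nor the self-adjointness of $\cA$. You therefore also lose integrability of $T_2$ and \eqref{eq::A-K-commute}, because turning $\bar K(Y,\cA X)=\bar K(X,\cA Y)$ into a commutator identity uses self-adjointness. The appeal to ``further Codazzi components'' or to the Gauss equation is never carried out.

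The missing idea is the \emph{traced} Codazzi equation, i.e.\ closedness of the Tchebychev form. Since $h((\widehat\nabla_XK)(Y,Z),W)$ is totally symmetric and $\widehat\nabla h=0$, contracting in $Y,Z$ gives $h(\widehat\nabla_XT,Y)=h(\widehat\nabla_YT,X)$. Use $T=\|T\|e_1$ and $\|T\|=\frac{n+1}{n}\lambda$. For $Y\in T_2$ this yields $\|T\|\,h(\widehat\nabla_{e_1}e_1,Y)=Y(\|T\|)=0$. For $X,Y\in T_2$ it yields $\|T\|\,h(\cA X,Y)=\|T\|\,h(\cA Y,X)$. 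The coefficient is now $\|T\|\neq0$ instead of $\mu-2\lambda$.

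Here you also need $Y(\lambda)=0$ for $Y\in T_2$, which you use without comment. The hypothesis is only $Y(\alpha)=0$, but it implies $Y(\lambda)=0$ because $\alpha=\lambda\|T\|=\frac{n+1}{n}\lambda^2$ with $\lambda>0$.

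Once these facts are in place, the rest of your outline works. The algebraic part is correct, and $e_1(\lambda)=0$ follows from the trace noted above. The $T_2$-part $\bar K(Y,\cA X)=\bar K(X,\cA Y)$, together with total symmetry of $\bar K$ and self-adjointness of $\cA$, gives $[\cA,\bar K_Z]=0$. For comparison, the paper does not rederive any of this; it simply quotes equations (2.2), (2.4), (2.9), (2.11) and (2.15)--(2.17) of Cao--Wang.
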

\begin{proof}
From $\beta=2\alpha$ one has $\mu=2\lambda$.
Equation (2.9) from \cite{CaoWang2026} gives $e_1(\lambda)=0$, while equation (2.15) from \cite{CaoWang2026} gives
$$
e_i(\lambda)=0,\qquad i=2,\ldots,n,
$$
thus $\lambda=\const$.  Since the equation (2.2) from \cite{CaoWang2026} becomes
$$
\|T\|=\frac{n+1}{n}\lambda,
$$
the assumption $T\neq0$ implies $\lambda>0$. From their equation (2.11) we obtain \eqref{eq::geodesic} from this paper.
The equation (2.17) from \cite{CaoWang2026} states
$$
\widehat\Gamma^1_{ij}=\widehat\Gamma^1_{ji}, \qquad i\neq j,\quad i,j=2,\ldots,n.
$$
Hence
$$
h([e_i,e_j],e_1)=\widehat\Gamma^1_{ij}-\widehat\Gamma^1_{ji}=0,
$$
so $T_2$ is integrable.  By \eqref{eq::A-second-form} from this paper, the same identity shows that $\cA$ is self-adjoint.
Equation \eqref{eq::barK-symmetry} from this paper follows from the total symmetry of the centroaffine cubic form \eqref{eq::cubicForm} from this paper.
Since $e_1=\frac{T}{\|T\|}$, $T=\frac{1}{n}\operatorname{tr}_h K$ and $K(e_1,e_1)=\mu e_1=2\lambda e_1$ we easily obtain \eqref{eq::barK-tracefree} from this paper (see (2.4) in \cite{CaoWang2026}).
Finally, using \eqref{eq::A-second-form} from this paper and the definition of $\bar K$, Cao--Wang's equation (2.16) and (2.17) from \cite{CaoWang2026} give \eqref{eq::A-K-commute} from this paper.
\end{proof}
We shall use the following standard local characterization of warped products due to Hiepko \cite{Hiepko1979}, see also \cite{Chen2017}.
\begin{definition}\label{def::spherical}
Let $\mathcal D$ be an integrable distribution on a Riemannian manifold $(M,h)$, and let $\mathcal D^\perp$ denote its orthogonal complement.  We call $\mathcal D$ \emph{spherical} if every integral manifold $N$ of $\mathcal D$ is totally umbilical and its mean curvature vector is parallel in the normal bundle.  Equivalently, there is a normal vector field $H\in\Gamma(\mathcal D^\perp)$ along each leaf such that, for all $X,Y\in\Gamma(\mathcal D)$,
$$
\widehat\nabla_XY=\nabla^N_XY+h(X,Y)H,\qquad\widehat\nabla_XH\in\Gamma(\mathcal D).
$$
Here $\nabla^N$ is the Levi--Civita connection of the metric induced on the leaf $N$.  Thus the first identity says that the second fundamental form of $N\subset M$ is $h(X,Y)H$, while the second says that the normal component of $\widehat\nabla_XH$ vanishes.
\end{definition}

\begin{theorem}[Hiepko, \cite{Hiepko1979}]\label{tw::Hiepko}
Let $(M,h)$ be a Riemannian manifold and suppose that
$$
TM=\mathcal D_1\oplus\mathcal D_2
$$
is an orthogonal decomposition into complementary integrable distributions.  If $\mathcal D_1$ is totally geodesic and $\mathcal D_2$ is spherical, then every point of $M$ has a neighborhood isometric to a warped product
$$
M_1\times_\rho M_2,
$$
where $M_i$ is an integral manifold of $\mathcal D_i$.
\end{theorem}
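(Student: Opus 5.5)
The plan is to build local coordinates adapted to both foliations and read the warped product structure off the metric coefficients. Since $\mathcal D_1$ and $\mathcal D_2$ are complementary and both integrable, the Frobenius theorem gives, around any point, coordinates $(u^1,\dots,u^k,v^{1},\dots,v^{m})$ such that the leaves of $\mathcal D_1$ are $\{v=\const\}$ and the leaves of $\mathcal D_2$ are $\{u=\const\}$. Thus $\partial_{u^a}$ span $\mathcal D_1$ and $\partial_{v^\alpha}$ span $\mathcal D_2$. Orthogonality of the decomposition makes the metric block diagonal:
$$
h=g_{ab}(u,v)\,du^a du^b+\tilde g_{\alpha\beta}(u,v)\,dv^\alpha dv^\beta .
$$
I then have to show that $g_{ab}$ does not depend on $v$, and that $\tilde g_{\alpha\beta}(u,v)=\rho(u)^2\,\tilde g_{\alpha\beta}(u_0,v)$ for some positive function $\rho$ of $u$ alone.

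\emph{Step 1 (the $\mathcal D_1$-block).} Take coordinate fields $X,Y\in\mathcal D_1$ and $V\in\mathcal D_2$, so $\widehat\nabla_VX=\widehat\nabla_XV$. Using $h(V,Y)=0$,
$$
V\bigl(h(X,Y)\bigr)=h(\widehat\nabla_XV,Y)+h(X,\widehat\nabla_YV)=-h(V,\widehat\nabla_XY)-h(V,\widehat\nabla_YX).
$$
Both terms vanish because $\mathcal D_1$ is totally geodesic, so $\widehat\nabla_XY$ and $\widehat\nabla_YX$ lie in $\mathcal D_1$. Hence $g_{ab}=g_{ab}(u)$. The same computation shows that $h(\widehat\nabla_XV,Y)=0$ for all $Y\in\mathcal D_1$, i.e. $\widehat\nabla_XV\in\mathcal D_2$. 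This fact is used again in Step 3.

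\emph{Step 2 (the $\mathcal D_2$-block).} For coordinate fields $U,W\in\mathcal D_2$ and $X\in\mathcal D_1$, the same manipulation together with the umbilicity $\widehat\nabla_UW=\nabla^N_UW+h(U,W)H$ gives
$$
X\bigl(h(U,W)\bigr)=-h(X,\widehat\nabla_UW)-h(X,\widehat\nabla_WU)=-2\,h(X,H)\,h(U,W).
$$
So $\partial_{u^a}\tilde g_{\alpha\beta}=-2\,\omega_a\,\tilde g_{\alpha\beta}$, where $\omega_a:=h(\partial_{u^a},H)$.

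\emph{Step 3 (the main point: $\omega_a$ depends only on $u$ and is closed).} This is where the sphericity assumption $\widehat\nabla_VH\in\mathcal D_2$ enters, and it is the step I expect to need the most care. For $V=\partial_{v^\alpha}$,
$$
V(\omega_a)=h(\widehat\nabla_V\partial_{u^a},H)+h(\partial_{u^a},\widehat\nabla_VH).
$$
The second term vanishes because $\widehat\nabla_VH\in\mathcal D_2$ is orthogonal to $\partial_{u^a}$. For the first term, $\widehat\nabla_V\partial_{u^a}=\widehat\nabla_{\partial_{u^a}}V\in\mathcal D_2$ by Step 1, while $H\in\mathcal D_1$, so it vanishes as well. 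Thus $\omega_a=\omega_a(u)$.

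Closedness of $\omega=\omega_a\,du^a$ now follows from Step 2. Fix $\alpha,\beta$ with $\tilde g_{\alpha\beta}\ne0$, for instance $\alpha=\beta$, where positivity guarantees this. Then $\omega_a=-\tfrac12\partial_{u^a}\log\tilde g_{\alpha\alpha}$, and equality of mixed partials gives $\partial_b\omega_a=\partial_a\omega_b$. On a simply connected coordinate neighbourhood write $\omega=-d\log\rho$ with $\rho=\rho(u)>0$ and $\rho(u_0)=1$.

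\emph{Step 4 (integrating and identifying the warped product).} Integrating the linear ODE of Step 2 in $u$ gives $\tilde g_{\alpha\beta}(u,v)=\rho(u)^2\,\tilde g_{\alpha\beta}(u_0,v)$. Therefore
$$
h=g_{M_1}+\rho^2 g_{M_2},
$$
where $g_{M_1}=g_{ab}(u)\,du^adu^b$ is the metric of the $\mathcal D_1$-leaf $\{v=v_0\}$ and $g_{M_2}$ is the metric induced on the $\mathcal D_2$-leaf $M_2=\{u=u_0\}$. This is exactly the warped product $M_1\times_\rho M_2$.
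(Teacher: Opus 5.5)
Your proof is correct. Note that the paper gives no proof of this statement: it quotes Hiepko's theorem (with a pointer to Chen's book) and uses it as a black box in Proposition~\ref{stw::strengthenedProp4}. So you are not taking a different route from the paper's argument; you are supplying a self-contained local proof of a cited result, via the standard adapted-coordinates argument.

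That proof has the merit of showing exactly where each hypothesis is used:
\begin{itemize}
\item Total geodesy of $\mathcal D_1$ gives $g_{ab}=g_{ab}(u)$ and $\widehat\nabla_XV\in\mathcal D_2$.
\item Umbilicity of the $\mathcal D_2$-leaves alone already gives a twisted product, $\tilde g_{\alpha\beta}(u,v)=\rho(u,v)^2\,\tilde g_{\alpha\beta}(u_0,v)$ with $\rho^2=\tilde g_{11}(u,v)/\tilde g_{11}(u_0,v)$.
\item The sphericity condition $\widehat\nabla_VH\in\mathcal D_2$ is precisely what makes $\omega_a$, and hence $\rho$, independent of $v$.
\end{itemize}
The paper's citation keeps the note short; your argument makes the warped-product step of Proposition~\ref{stw::strengthenedProp4} verifiable without consulting the literature.

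A few points are used tacitly and should be stated:
\begin{itemize}
\item $H$ is a smooth vector field on $M$, not merely a field along each leaf. For example, $H=\frac1m\sum_i(\widehat\nabla_{E_i}E_i)^{\mathcal D_1}$ for a local orthonormal frame $E_i$ of $\mathcal D_2$. You need this to differentiate $\omega_a$ along $V$ in Step 3.
\item The chart should be a product of connected (say ball-shaped) domains in $u$ and in $v$. Otherwise the integration in $u$ in Step 4 is not justified.
\item The appeal to simple connectivity is unnecessary. Since $\omega_a$ depends only on $u$, $\omega=-\frac12\,d\log\tilde g_{11}(\cdot,v_0)$ is already exact, which gives $\rho(u)=\bigl(\tilde g_{11}(u,v_0)/\tilde g_{11}(u_0,v_0)\bigr)^{1/2}$ explicitly.
\end{itemize}

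Finally, in the paper's application $\mathcal D_1=T_1$ is one-dimensional, so the closedness of $\omega$ is automatic there.
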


\section{Main Results}\label{sec::degenerate}

The main purpose of this section is to prove strengthened version of Proposition 4 from \cite{CaoWang2026} and in consequence Theorem \ref{tw::main}. First we shall prove the following
\begin{lemma}\label{lm::projection}
Let $(N^m,h)$ be a Riemannian manifold and let
$$
B\colon TN\longrightarrow TN
$$
be a smooth $h$-self-adjoint endomorphism.
Then for every $p\in N$ and $c\in\mathbb R\setminus\operatorname{Spec}(B_p)$
there exists a neighborhood $U$ of $p$ such that
$$
c\notin\operatorname{Spec}(B_q)\qquad(q\in U),
$$
and the spaces
$$
E_{>c}(q):=\bigoplus_{\substack{\xi\in\operatorname{Spec}(B_q)\\ \xi>c}}\ker(B_q-\xi\id)\subset T_qN
$$
have constant dimension and form a smooth vector subbundle
$$
  E_{>c}\subset TN|_U.
$$
Let $P$ be the $h$-orthogonal projection onto $E_{>c}$.  Then $P$ is smooth and
$$
[P,B]=0.
$$
Moreover, if a smooth endomorphism $L\colon TN\to TN$ satisfies $[B,L]=0$, then $[P,L]=0$.
\end{lemma}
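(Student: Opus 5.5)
The plan is to use the Riesz (holomorphic functional calculus) projection, which handles smoothness, constancy of rank and the commutation properties all at once. Fix $p$ and $c\notin\operatorname{Spec}(B_p)$.

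\emph{Step 1 (a gap around $c$).} Since $B$ is self-adjoint, its eigenvalues, listed with multiplicity, depend continuously on the point: they are the roots of the characteristic polynomial, whose coefficients are smooth. Alternatively, by the min--max principle, $|\xi_k(q)-\xi_k(p)|\le\|B_q-B_p\|$ in an orthonormal frame. Put $\delta:=\operatorname{dist}(c,\operatorname{Spec}(B_p))>0$. Then there is a neighborhood $U$ of $p$ on which every eigenvalue of $B_q$ lies at distance greater than $\delta/2$ from $c$. In particular $c\notin\operatorname{Spec}(B_q)$. Moreover, the number of eigenvalues (with multiplicity) exceeding $c$ is constant on $U$, so $\dim E_{>c}(q)$ is constant.

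\emph{Step 2 (smoothness via the Riesz projection).} Let $R>\sup_{q\in U}\|B_q\|+|c|+1$, after shrinking $U$ to a relatively compact set. Let $\Gamma$ be the positively oriented boundary of the rectangle with vertices $c\pm iR$ and $c+2R\pm iR$. This contour encloses exactly the eigenvalues greater than $c$, and it stays uniformly away from $\operatorname{Spec}(B_q)$, since its left side passes through $c$ while the spectrum avoids the disc of radius $\delta/2$ about $c$. Define
$$
P_q:=\frac{1}{2\pi i}\oint_\Gamma (z\,\id-B_q)^{-1}\,dz .
$$
This is computed on the complexification, but the result is real. Diagonalizing $B_q$ in an $h$-orthonormal eigenbasis shows that $P_q$ is precisely the $h$-orthogonal projection onto $E_{>c}(q)$. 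In a local frame the integrand is smooth in $(q,z)$ on $U\times\Gamma$, because the resolvent is a rational function of the matrix entries with nonvanishing denominator. Hence $P$ is smooth. Its image $E_{>c}$ is then a smooth subbundle: a smooth projection of constant rank has a smooth image bundle, with local frame $Pv_1,\dots,Pv_k$ for suitable local vector fields $v_j$.

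\emph{Step 3 (commutation).} If $[B,L]=0$ at each point, then $L$ commutes with $z\,\id-B$, and therefore with its inverse. Integrating over $\Gamma$ gives $[P,L]=0$. Taking $L=B$ gives $[P,B]=0$.

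\emph{Alternative to the contour integral.} One can write $P=f(B)$, where $f$ is a polynomial that equals $1$ on the eigenvalues greater than $c$ and $0$ on those less than $c$. This version is less clean for smoothness, because the set of eigenvalues varies with $q$, so I would use the Riesz integral.

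\emph{Main obstacle.} The main difficulty is Step 1: making the spectral gap uniform on a neighborhood. It rests on continuity of the eigenvalues of self-adjoint operators, and I would quote that via the min--max principle.
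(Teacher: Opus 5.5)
Your proposal is correct, but it takes a different route from the paper.

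The paper stays within real linear algebra and the implicit function theorem. It fixes a smooth orthonormal frame adapted at $p$ to $E_{>c}(p)\oplus E_{>c}(p)^\perp$ and writes $B$ in block form. The graph of $X\colon V\to W$ is $B$-invariant exactly when the Riccati-type equation $B_{21}+B_{22}X-XB_{11}-XB_{12}X=0$ holds. The linearization at $(p,0)$ is the Sylvester operator $Y\mapsto B_{22}(p)Y-YB_{11}(p)$, which is invertible because $c$ separates the two spectra. The implicit function theorem then yields a smooth $B$-invariant rank-$r$ subbundle $G$. This $G$ is identified with $E_{>c}$ by a second eigenvalue-continuity argument and a dimension count. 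Only afterwards is $P$ defined, and the commutation claims are checked pointwise via preservation of eigenspaces.

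You reverse the order. The Riesz formula makes smoothness of $P$ immediate, since the resolvent is rational in the matrix entries with nonvanishing denominator on $U\times\Gamma$. The subbundle is then recovered as the image of a smooth constant-rank projection. Both commutation statements follow at once from $L$ commuting with $(z\,\id-B)^{-1}$.

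Your route removes the implicit function theorem and the separate identification step $G=E_{>c}$. The cost is complexifying and choosing a single contour that works uniformly on $U$; your rectangle together with the relatively compact $U$ handles this correctly. The paper's argument is longer but entirely real and elementary, and it exhibits $E_{>c}$ explicitly as a graph over the fixed subbundle $V$.
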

\begin{proof}
Let $p\in N$, $c\in\mathbb R\setminus\operatorname{Spec}(B_p)$. Since $B$ is smooth, there exists a neighborhood $U$ of $p$ such that $c\notin\operatorname{Spec}(B_q)$ for all $q\in U$.
Let
$$
\xi_1(q)\le\cdots\le\xi_m(q)
$$
denote the ordered eigenvalues of $B_q$, counted with multiplicity.  Since $B$ is smooth and self-adjoint, the functions $\xi_i$ are continuous (but not necessarily smooth).  Since $c\notin\operatorname{Spec}(B_p)$,
$$
\delta:=\min_{1\le i\le m}|\xi_i(p)-c|>0.
$$
Shrinking $U$ if needed, we obtain
$$
|\xi_i(q)-\xi_i(p)|<\frac{\delta}{2}
$$
for every $q\in U$ and every $i=1,\ldots,m$.
Thus no eigenvalue crosses the level $c$ on $U$, and in consequence $\dim E_{>c}(q)$ is constant.  Denote this dimension by $r$.
If $r=0$ or $r=m$, the assertion is immediate.  Hence assume $1\le r\le m-1$.
\par At $p$ we define subspaces
$$
V_p^+:=E_{>c}(p),\qquad V_p^-:=(V_p^+)^\perp.
$$
Choose a smooth local $h$-orthonormal frame $v_1,\ldots,v_m$ near $p$ such that
$$
v_1(p),\ldots,v_r(p)
$$
is an orthonormal basis of $V_p^+$ and
$$
v_{r+1}(p),\ldots,v_m(p)
$$
is an orthonormal basis of $V_p^-$.  Let $V$ and $W$ be the local subbundles spanned by the first $r$ and the last $m-r$ vectors of this frame, respectively.  Thus
$$
TN|_U=V\oplus W.
$$
With respect to this decomposition write
$$
B=
\begin{pmatrix}
B_{11} & B_{12}\\
B_{21} & B_{22}
\end{pmatrix},
$$
where
$$
B_{11}\colon V\to V,\qquad B_{12}\colon W\to V, \qquad B_{21}\colon V\to W, \qquad B_{22}\colon W\to W.
$$
At $p$ one has
$$
B_{12}(p)=B_{21}(p)=0,
$$
since $V_p^+$ and $V_p^-$ are $B_p$-invariant.  Moreover, every eigenvalue of $B_{11}(p)$ is larger than $c$, whereas every eigenvalue of $B_{22}(p)$ is smaller than $c$.
Using the chosen local frame, we identify each space $\operatorname{Hom}(V_q,W_q)$ with the fixed vector space $\mathbb R^{(m-r)\times r}$.  Define
$$
F\colon U\times\mathbb R^{(m-r)\times r}\longrightarrow\mathbb R^{(m-r)\times r}
$$
by
\begin{equation}\label{eq::spectral-F}
F(q,X)=B_{21}(q)+B_{22}(q)X-XB_{11}(q)-XB_{12}(q)X.
\end{equation}
For $v\in V_q$,
$$
B_q(v+Xv)=\bigl(B_{11}(q)v+B_{12}(q)Xv\bigr)+\bigl(B_{21}(q)v+B_{22}(q)Xv\bigr).
$$
Hence the graph of $X$ is $B_q$-invariant if and only if
$$
B_{21}(q)v+B_{22}(q)Xv=X\bigl(B_{11}(q)v+B_{12}(q)Xv\bigr)
$$
for every $v\in V_q$, equivalently if and only if
$$
F(q,X)=0.
$$
Since $B_{21}(p)=0$,
$$
F(p,0)=0.
$$
The derivative of $F$ with respect to the second variable at $(p,0)$ is
$$
D_XF(p,0)[Y]=B_{22}(p)Y-YB_{11}(p),\qquad Y\in\mathbb R^{(m-r)\times r}.
$$
This linear map is invertible.  Indeed, choose orthonormal eigenbases
$$
B_{11}(p)u_i=\alpha_i u_i, \qquad B_{22}(p)w_j=\beta_j w_j,
$$
where $\alpha_i>c$ and $\beta_j<c$.  If
$$
Yu_i=\sum_j y_{ji}w_j,
$$
then
$$
\bigl(B_{22}(p)Y-YB_{11}(p)\bigr)u_i=\sum_j(\beta_j-\alpha_i)y_{ji}w_j.
$$
Since $\beta_j-\alpha_i\neq0$ for all $i,j$, the kernel is zero.  The domain and codomain have the same dimension $r(m-r)$, so $D_XF(p,0)$ is invertible.
After choosing local coordinates on $U$, the smooth implicit function theorem gives, after reducing $U$ if necessary, a unique smooth map
$$
X\colon U\longrightarrow\mathbb R^{(m-r)\times r}
$$
such that
$$
X(p)=0,\qquad F(q,X(q))=0
$$
for every $q\in U$.  Interpreting $X(q)$ again as a linear map $X(q)\colon V_q\to W_q$, its graphs
$$
G_q:=\{v+X(q)v:\ v\in V_q\}
$$
form a smooth rank-$r$ subbundle $G\subset TN|_U$ invariant under $B$.
At $p$ we have
$$
G_p=V_p^+=E_{>c}(p).
$$
In the smooth frame $v_i+Xv_i$, $1\le i\le r$, the restriction $B|_G$ is represented by
$$
B_{11}+B_{12}X.
$$
At $p$ all eigenvalues of $B|_{G_p}$ are larger than $c$.  Since $B|_G$ is a smooth self-adjoint endomorphism of the smooth bundle $G$, its ordered eigenvalues are continuous.
After reducing $U$ once more, all eigenvalues of $B_q|_{G_q}$ are therefore larger than $c$ for every $q\in U$.  Hence
$$
G_q\subset E_{>c}(q).
$$
Since
$$
\dim G_q=r=\dim E_{>c}(q),
$$
we obtain
$$
G_q=E_{>c}(q)\qquad(q\in U).
$$
Thus $E_{>c}$ is a smooth vector subbundle of $TN|_U$.
\par Let $P$ be the $h$-orthogonal projection onto $E_{>c}$.  Since $E_{>c}$ is smooth, $P$ is a smooth tensor field of type $(1,1)$.
Both $E_{>c}(q)$ and its orthogonal complement are sums of eigenspaces of the self-adjoint endomorphism $B_q$, so they are preserved by $B_q$.  Hence $[P,B]=0$.
If $[B,L]=0$, then $L_q$ preserves every eigenspace of $B_q$, and therefore preserves both $E_{>c}(q)$ and its orthogonal complement.  Thus $[P,L]=0$.
\end{proof}

We now apply this projection to the endomorphism $\cA$ itself.  By Lemma~\ref{lm::CW}, every local integral leaf $N$ of $T_2$ carries the induced Riemannian metric. Let $\bar\nabla$ denote the Levi-Civita connection of this induced metric.

\begin{lemma}\label{lm::operatorA}
Assume the setting of Lemma~\ref{lm::CW}.
Let $N$ be a local integral leaf of $T_2$, with Levi-Civita
connection $\bar\nabla$.
Suppose that, for every $q\in N$, every $X\in T_qN$, and any
$h$-orthonormal basis $e_2,\ldots,e_n$ of $T_qN$, one has
\begin{equation}\label{eq::Kbar}
\sum_{i=2}^n(\bar\nabla_{e_i}\bar K)(e_i,X)=-\lambda\bigl((n-1)\cA-(\tr\cA)\id\bigr)X,
\end{equation}
where $\lambda\neq0$, all terms in
\eqref{eq::Kbar} are evaluated at $q$, and
$\tr\cA$ denotes the fiberwise trace of $\cA|_{TN}$. Then
$$
\cA=\frac{\tr\cA}{n-1}\id\qquad\text{on }TN.
$$
\end{lemma}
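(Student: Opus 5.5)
The plan is to take the trace of \eqref{eq::Kbar} against the spectral projections of $\cA$ supplied by Lemma~\ref{lm::projection}, to show that the left-hand side then vanishes identically, and to read off from the right-hand side that every "upper part" of the spectrum of $\cA$ has the same mean eigenvalue as the whole spectrum. This forces $\cA$ to be a multiple of the identity.

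\emph{Step 1: block structure of $\bar K$.} Fix $q\in N$ and $c\in\R\setminus\operatorname{Spec}(\cA_q)$. Apply Lemma~\ref{lm::projection} to $B=\cA|_{TN}$; this is legitimate because $\cA$ is self-adjoint by Lemma~\ref{lm::CW}. It gives, near $q$, a smooth subbundle $E:=E_{>c}$ of rank $r$ with orthogonal projection $P$. By \eqref{eq::A-K-commute} and the last part of Lemma~\ref{lm::projection}, $[P,\bar K_Z]=0$ for every $Z$, so each $\bar K_Z$ preserves both $E$ and $E^\perp$. Now take $u\in E$ and $w\in E^\perp$. Then $\bar K(u,w)=\bar K_u w\in E^\perp$ and also $\bar K(u,w)=\bar K_w u\in E$, so $\bar K(u,w)=0$. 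Similarly, for $u,v\in E$ we get $\bar K(u,v)\in E$. Choose an orthonormal frame adapted to $E\oplus E^\perp$. Then \eqref{eq::barK-tracefree} splits as a sum of a vector in $E$ and a vector in $E^\perp$, so each piece vanishes:
$$\sum_{e_j\in E}\bar K(e_j,e_j)=0 .$$
By the symmetry \eqref{eq::barK-symmetry}, this says $\tr(P\bar K_Z)=h\bigl(Z,\sum_{e_j\in E}\bar K(e_j,e_j)\bigr)=0$ for every $Z$.

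\emph{Step 2: tracing \eqref{eq::Kbar} against $P$.} Apply both sides of \eqref{eq::Kbar} to $X=e_j$, pair with $Pe_j$, and sum over $j$. On the left this gives $\sum_{i}\tr\bigl(P\circ(\bar\nabla_{e_i}\bar K)_{e_i}\bigr)$, which by the product rule equals
$$\sum_i\Bigl[e_i\bigl(\tr(P\bar K_{\,\cdot\,})\bigr)\text{-type divergence terms}\Bigr]-\sum_i\tr\bigl((\bar\nabla_{e_i}P)\,\bar K_{e_i}\bigr).$$
The first group is the divergence of the vector field $W$ defined by $h(W,Z)=\tr(P\bar K_Z)$, and $W=0$ by Step 1. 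For the second group, differentiating $P^2=P$ shows that $\bar\nabla_{e_i}P$ maps $E\to E^\perp$ and $E^\perp\to E$. Since $\bar K_{e_i}$ is block diagonal, $(\bar\nabla_{e_i}P)\bar K_{e_i}$ is block off-diagonal, so its trace is $0$. Hence the whole left-hand side is zero. The right-hand side gives $-\lambda\bigl((n-1)\tr(P\cA)-r\,\tr\cA\bigr)$. Because $\lambda\ne0$ we obtain
$$\frac{\tr(\cA|_{E_{>c}})}{r}=\frac{\tr\cA}{n-1}\qquad\text{whenever }1\le r .$$

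\emph{Step 3: conclusion.} At $q$, let $\xi_{\max}$ be the largest eigenvalue of $\cA_q$. Choose $c<\xi_{\max}$ that lies strictly above all the other eigenvalues. Then $E_{>c}(q)$ is the $\xi_{\max}$-eigenspace, and Step 2 gives $\xi_{\max}=\tr\cA/(n-1)$, which is the mean of all eigenvalues. Since the maximum equals the mean, all eigenvalues coincide, so $\cA_q=\frac{\tr\cA}{n-1}\id$.

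The main obstacle is Step 2: checking that the derivative of $P$ contributes nothing. This is where Lemma~\ref{lm::projection} is essential, because it provides the smoothness of $P$ and the commutation $[P,\bar K_Z]=0$; the off-diagonal/block-diagonal trace argument then disposes of that term.
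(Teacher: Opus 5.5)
Your proposal is correct and follows essentially the same route as the paper: the spectral projection $P$ from Lemma~\ref{lm::projection}, the vanishing of $\tr(P\bar K_Z)$, and a divergence computation in which the $\bar\nabla P$ term drops out by the block off-diagonal argument. Three small differences, none of them substantive: you get $\tr(P\bar K_Z)=0$ from the block structure $\bar K(E,E^\perp)=0$, you use a covariant divergence instead of a normal frame, and you argue directly with the top eigenspace (maximum eigenvalue equals the mean) rather than by contradiction with the part of the spectrum above the mean.
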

\begin{proof}
Assume that $\cA_p$ is not a scalar endomorphism at some point $p\in N$.  Since $\cA_p$ is self-adjoint, its eigenvalues are real.  Their average is
$$
\frac{\tr\cA_p}{n-1}.
$$
Because they are not all equal, at least one eigenvalue is strictly larger than this average.  Choose $c$ such that
\begin{equation}\label{eq::c-choice-A}
\frac{\tr\cA_p}{n-1}<c<\min\left\{\xi\in\operatorname{Spec}(\cA_p):\xi>\frac{\tr\cA_p}{n-1}\right\}.
\end{equation}
By the choice of $c$, we have $c\notin\operatorname{Spec}(\cA_p)$. Now using Lemma~\ref{lm::projection} for $B=\cA$ we get that
there exists a neighborhood $U\subset N$ of $p$ and a smooth $h$-orthogonal projection
$$
P\in\Gamma\bigl(\operatorname{End}(TN|_U)\bigr)
$$
onto the subbundle $E_{>c}$ corresponding to eigenvalues larger than $c$.
At $p$ we have
$$
\operatorname{im}P_p=\bigoplus_{\substack{\xi\in\operatorname{Spec}(\cA_p)\\\xi>\tr\cA_p/(n-1)}}\ker(\cA_p-\xi\id).
$$
Let $\xi_1,\ldots,\xi_r$ be the eigenvalues of the restriction $\cA_p|_{\operatorname{im}P_p}$, counted with multiplicity.  Then
$$
\xi_k>\frac{\tr\cA_p}{n-1},\qquad k=1,\ldots,r,
$$
and therefore
\begin{equation}\label{eq::GreaterThanZero}
\tr\bigl(P_p((n-1)\cA_p-(\tr\cA_p)\id)\bigr)=(n-1)\sum_{k=1}^r\left(\xi_k-\frac{\tr\cA_p}{n-1}\right)>0.
\end{equation}
We shall show that \eqref{eq::Kbar} forces the above trace to vanish.

For every $q\in U$ and $Z\in T_qN$, Lemma~\ref{lm::CW} gives $[\cA_q,\bar K_Z]=0$ thus from Lemma~\ref{lm::projection} we obtain
\begin{equation}\label{eq::PcommutesKbar}
[P_q,\bar K_Z]=0\qquad(q\in U,\ Z\in T_qN).
\end{equation}
Define the one-form $\omega_P\in\Omega^1(U)$ by
\begin{equation}\label{eq::omegaP}
(\omega_P)_q(Z):=\tr(P_q\bar K_Z),\qquad q\in U,\ Z\in T_qN.
\end{equation}
Fix $q\in U$ and let $e_2,\ldots,e_n$ be any $h$-orthonormal basis of $T_qN$.  Self-adjointness of $P_q$ and formulas \eqref{eq::barK-symmetry} and \eqref{eq::PcommutesKbar} give
\begin{align*}
(\omega_P)_q(Z)&=\sum_{j=2}^n h(P_q\bar K_Ze_j,e_j)=\sum_{j=2}^n h(\bar K(Z,e_j),P_qe_j)\\
&=\sum_{j=2}^n h(\bar K(e_j,P_qe_j),Z)=\sum_{j=2}^n h(P_q\bar K(e_j,e_j),Z)=0,
\end{align*}
where the last equality we get from \eqref{eq::barK-tracefree}.  Since $q\in U$ and $Z\in T_qN$ were arbitrary we obtain
\begin{equation}\label{eq::omegazero}
\omega_P\equiv0\qquad\text{on }U.
\end{equation}

After shrinking $U$ if necessary we may choose an $h$-orthonormal frame $e_2,\ldots,e_n$ on $U$ such that
\begin{equation}\label{eq::normalFrame}
(\bar\nabla_{e_i}e_j)_p=0,\quad i,j=2,\ldots,n.
\end{equation}
Since $\omega_P\equiv0$, we have
\begin{align*}
0&=\sum_{i=2}^n e_i\bigl(\omega_P(e_i)\bigr)=\sum_{i,j=2}^n e_i\bigl(h(P\bar K(e_i,e_j),e_j)\bigr)\\
 &=\sum_{i,j=2}^n h\bigl((\bar\nabla_{e_i}P)\bar K(e_i,e_j),e_j\bigr)+\sum_{i,j=2}^n h\bigl(P(\bar\nabla_{e_i}\bar K)(e_i,e_j),e_j\bigr)\\
 &\quad+\sum_{i,j=2}^n h\bigl(P\bar K(\bar\nabla_{e_i}e_i,e_j),e_j\bigr)+\sum_{i,j=2}^n h\bigl(P\bar K(e_i,\bar\nabla_{e_i}e_j),e_j\bigr)+\sum_{i,j=2}^n h\bigl(P\bar K(e_i,e_j),\bar\nabla_{e_i}e_j\bigr).
\end{align*}
Evaluating this identity at $p$, the last three sums vanish by \eqref{eq::normalFrame}.  Hence
\begin{align}\label{eq::omegaDiff}
0=\sum_{i,j=2}^n h\bigl((\bar\nabla_{e_i}P)_p\bar K(e_i,e_j),e_j\bigr)+\sum_{i,j=2}^n h\bigl(P_p(\bar\nabla_{e_i}\bar K)(e_i,e_j),e_j\bigr)
\end{align}
at $p$.
\par We first show that the first sum vanishes.  For $X\in T_pN$, differentiating $P^2=P$ at $p$ in the direction $X$ gives
$$
(\bar\nabla_XP)_pP_p+P_p(\bar\nabla_XP)_p=(\bar\nabla_XP)_p.
$$
Composing on the left with $P_p$ and using $P_p^2=P_p$, we obtain
$$
P_p(\bar\nabla_XP)_pP_p+P_p(\bar\nabla_XP)_p=P_p(\bar\nabla_XP)_p,
$$
hence
$$
P_p(\bar\nabla_XP)_pP_p=0.
$$
Moreover,
\begin{align*}
(\id-P_p)(\bar\nabla_XP)_p(\id-P_p)=(\bar\nabla_XP)_p-(\bar\nabla_XP)_pP_p-P_p(\bar\nabla_XP)_p+P_p(\bar\nabla_XP)_pP_p=0.
\end{align*}
Therefore
$$
(\bar\nabla_XP)_p(\operatorname{im}P_p)\subset\ker P_p,\qquad (\bar\nabla_XP)_p(\ker P_p)\subset\operatorname{im}P_p.
$$
On the other hand, \eqref{eq::PcommutesKbar} at $p$ implies that $\bar K_Z$ preserves both $\operatorname{im}P_p$ and $\ker P_p$ for every $Z\in T_pN$.  Hence $(\bar\nabla_XP)_p\bar K_Z$ maps $\operatorname{im}P_p$ into $\ker P_p$ and $\ker P_p$ into $\operatorname{im}P_p$.  In an orthonormal basis of $T_pN$ such that  first vectors span $\operatorname{im}P_p$ and rest span $\ker P_p$, every diagonal entry of this composition is zero.  Therefore
\begin{equation}\label{eq::trace-derivative-term-zero}
  \tr\bigl((\bar\nabla_XP)_p\bar K_Z\bigr)=0
  \qquad\text{for every }X,Z\in T_pN.
\end{equation}
Applying the above for $X=Z=e_i$ we obtain that the first sum in \eqref{eq::omegaDiff} is zero.
For the second sum, using \eqref{eq::Kbar} at $p$, we get:
\begin{align*}
0&=\sum_{j=2}^n h\left( P_p\left[\sum_{i=2}^n(\bar\nabla_{e_i}\bar K)(e_i,e_j)\right],e_j\right)\\
 &=-\lambda\sum_{j=2}^n h\bigl(P_p((n-1)\cA_p-(\tr\cA_p)\id)e_j,e_j\bigr)\\
 &=-\lambda\tr\bigl(P_p((n-1)\cA_p-(\tr\cA_p)\id)\bigr).
\end{align*}
Since $\lambda>0$, this implies
$$
\tr\bigl(P_p((n-1)\cA_p-(\tr\cA_p)\id)\bigr)=0,
$$
which contradicts the strict positivity established in \eqref{eq::GreaterThanZero}. In consequence no point $p\in N$ can have non-scalar $\cA_p$, and hence
$$
\cA=\frac{\tr\cA}{n-1}\id\qquad\text{on }TN.
$$
\end{proof}
Now we can prove the following strengthened version of Proposition~4 from \cite{CaoWang2026}.
\begin{proposition}[Strengthened Proposition 4]\label{stw::strengthenedProp4}
Let $M^n$, $n\ge3$, be a locally strongly convex centroaffine hypersurface satisfying
$$
K(T,X)=\alpha X,\qquad X(\alpha)=0\quad\text{for every }X\perp T,
$$
where the Tchebychev vector field $T$ is non-zero, and suppose
$$
K(T,T)=\beta T.
$$
If
$$
\beta=2\alpha,
$$
then $(M,h)$ admits locally a warped-product structure
$$
(M,h)=I\times_\rho N.
$$
No restriction on the $T_2$-component $\bar K$ of the difference tensor is required.
\end{proposition}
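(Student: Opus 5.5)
We plan to apply Hiepko's Theorem~\ref{tw::Hiepko} with $\mathcal D_1=T_1=\Span\{e_1\}$ and $\mathcal D_2=T_2$. The distribution $T_1$ is one-dimensional, hence integrable, and it is totally geodesic by \eqref{eq::geodesic}. Lemma~\ref{lm::CW} gives integrability of $T_2$. The whole difficulty is therefore to show that $T_2$ is spherical. This amounts to two facts. First, $\cA$ must be a multiple of the identity on $T_2$, say $\cA=f\,\id$; then by \eqref{eq::A-second-form} the second fundamental form of a leaf is $h(X,Y)H$ with $H=f e_1$. Second, $\widehat\nabla_XH\in T_2$ for $X\in T_2$, i.e. $X(f)=0$. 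Without the hypothesis $\bar K=0$, umbilicity is exactly what Lemma~\ref{lm::operatorA} delivers, provided we verify its hypothesis \eqref{eq::Kbar}.

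\textbf{Step 1: the Codazzi-type identity \eqref{eq::Kbar}.} Since $\nabla$ is projectively flat (the centroaffine Gauss equation has $R(X,Y)Z=\epsc(h(Y,Z)X-h(X,Z)Y)$), the standard Codazzi equation holds: $(\widehat\nabla_XK)(Y,Z)-(\widehat\nabla_YK)(X,Z)$ is expressed through curvature terms. In the centroaffine setting $\widehat\nabla K$ is in fact totally symmetric up to such terms.

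We evaluate this equation with $X,Y\in T_2$ and $Z\in T_2$, project onto $T_2$, and then contract. We expand $K$ via \eqref{eq::basic-K} and \eqref{eq::Kdecomp}, use that $\lambda$ is constant, and use $\widehat\nabla_Xe_1=-\cA X$ together with $\widehat\nabla_XY=\bar\nabla_XY+h(\cA X,Y)e_1$. The $T_2$-component of $(\widehat\nabla_XK)(Y,Z)$ is then
$$(\bar\nabla_X\bar K)(Y,Z)-\lambda h(Y,Z)\cA X$$
plus terms from $K(e_1,\cdot)$ applied to the $e_1$-parts, which yield $\lambda h(\cA X,Y)Z+\lambda h(\cA X,Z)Y$. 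Contracting the symmetric combination over $Y=e_i$ and $X=e_i$, and using \eqref{eq::barK-tracefree} and the self-adjointness of $\cA$, the traces of $\bar\nabla\bar K$ over the other slots vanish. The curvature terms are multiples of $h(\cdot,\cdot)$ times a tangent vector in the contracted slot, so after the $T_2$-projection with $Z\in T_2$ they cancel. What remains should be precisely \eqref{eq::Kbar}. We expect this bookkeeping to be the main obstacle: the coefficient $-\lambda((n-1)\cA-(\tr\cA)\id)$ must come out exactly, and we will cross-check it against Cao--Wang's equations (2.16)–(2.17). Alternatively, it may already be present among their (2.18)–(2.20).

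\textbf{Step 2: umbilicity and parallel mean curvature.} With \eqref{eq::Kbar} established and $\lambda>0$, Lemma~\ref{lm::operatorA} gives $\cA=f\,\id$ on each leaf, with $f=\tr\cA/(n-1)$. It remains to prove $X(f)=0$ for $X\in T_2$. For this we use the Codazzi equation of $\widehat\nabla$ (the Riemannian curvature $\widehat R$ obtained from the Gauss equation $\widehat R=R-[K,K]$-type formula) in the form
$$h(\widehat R(X,Y)e_1,Z),\qquad X,Y,Z\in T_2.$$
On one side, $\widehat R(X,Y)e_1=-(X f)Y+(Yf)X$ modulo $e_1$. On the other side, using $R$ and $K$, the same expression equals
$$h(R(X,Y)e_1,Z)-h([K_X,K_Y]e_1,Z).$$
The first term vanishes since $h(Y,e_1)=0$, and $K_Ye_1=\lambda Y$ gives $[K_X,K_Y]e_1=\lambda(K_XY-K_YX)=0$. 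Hence $(Xf)Y=(Yf)X$ for all $X,Y\in T_2$. Since $\dim T_2=n-1\ge2$, this forces $Xf=0$, so $\widehat\nabla_XH=-f^2X\in T_2$.

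\textbf{Step 3: conclusion.} Hiepko's Theorem~\ref{tw::Hiepko} now yields the local isometry with $I\times_\rho N$, where $I$ is an integral curve of $e_1$. At no point was $\bar K=0$ used.
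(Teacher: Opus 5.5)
Your proposal is correct and follows the paper's proof essentially step for step: you derive \eqref{eq::Kbar} from the $T_2$-component of the Codazzi identity for $K$, apply Lemma~\ref{lm::operatorA} to get umbilicity, obtain $X(f)=0$ from $\widehat R(X,Y)e_1=-[K_X,K_Y]e_1=0$, and conclude with Hiepko's theorem. Two details of Step~1 need fixing: in the centroaffine case the Codazzi identity is exactly $(\widehat\nabla_XK)(Y,Z)=(\widehat\nabla_YK)(X,Z)$ with no curvature terms (a term like $\epsc(h(Y,Z)X-h(X,Z)Y)$ would not cancel under contraction but would contribute $\epsc(n-2)X$), and the contraction must be over $Y=Z=e_i$ rather than $X=Y$ (which only gives $0=0$), with the terms $\lambda h(\cA X,Y)Z+\lambda h(\cA X,Z)Y$ entering with a minus sign.
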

\begin{proof}
By Lemma~\ref{lm::CW}, $\lambda$ is a positive constant, $T_1$ is totally geodesic, $T_2$ is integrable, and $\cA$ is self-adjoint.
Let $N$ be a local integral leaf of $T_2$, and let $\bar\nabla$ be the Levi-Civita connection of its induced metric.  From \eqref{eq::A-second-form} and integrability of $T_2$ we have
\begin{equation}\label{eq::leaf-connection}
\widehat\nabla_XY=\bar\nabla_XY+h(\cA X,Y)e_1,\qquad X,Y\in T_2.
\end{equation}
First we compute $(\widehat\nabla_XK)(Y,Z)$.  By \eqref{eq::Kdecomp}, constancy of $\lambda$, \eqref{eq::leaf-connection}, and since $\widehat\nabla_Xe_1=-\cA X$ we obtain
\begin{align*}
\widehat\nabla_X(K(Y,Z))&={\bar\nabla}_X(\bar K(Y,Z))-\lambda h(Y,Z)\cA X\\
&\quad+\bigl(h(\cA X,\bar K(Y,Z))+\lambda X(h(Y,Z))\bigr)e_1.
\end{align*}
Moreover, since $K(e_1,Z)=\lambda Z$ and $K(Y,e_1)=\lambda Y$ we have
\begin{align*}
K(\widehat\nabla_XY,Z)&={\bar K}({\bar\nabla}_XY,Z)+\lambda h(\cA X,Y)Z+\lambda h({\bar\nabla}_XY,Z)e_1,
\end{align*}
and
\begin{align*}
K(Y,\widehat\nabla_XZ)&={\bar K}(Y,{\bar\nabla}_XZ)+\lambda h(\cA X,Z)Y+\lambda h(Y,{\bar\nabla}_XZ)e_1.
\end{align*}
Therefore, from the definition of the covariant derivative of $K$,
\begin{align}\label{eq::NablaK}
(\widehat\nabla_XK)(Y,Z)&=(\bar\nabla_X\bar K)(Y,Z)-\lambda\bigl(h(Y,Z)\cA X+h(\cA X,Y)Z+h(\cA X,Z)Y\bigr)\\
\notag &\quad+\Bigl[h(\cA X,\bar K(Y,Z))+\lambda\bigl(X(h(Y,Z))-h({\bar\nabla}_XY,Z)-h(Y,{\bar\nabla}_XZ)\bigr)\Bigr]e_1.
\end{align}
Since $\bar\nabla$ is the Levi-Civita connection of the induced metric,
$$
X(h(Y,Z))=h({\bar\nabla}_XY,Z)+h(Y,{\bar\nabla}_XZ),
$$
and the \eqref{eq::NablaK} reduces to
\begin{align*}
(\widehat\nabla_XK)(Y,Z)&=(\bar\nabla_X\bar K)(Y,Z)-\lambda\bigl(h(Y,Z)\cA X+h(\cA X,Y)Z+h(\cA X,Z)Y\bigr)\\
&\quad+h(\cA X,\bar K(Y,Z))e_1.
\end{align*}
Interchanging $X$ and $Y$ and using the Codazzi identity
$$
(\widehat\nabla_XK)(Y,Z)=(\widehat\nabla_YK)(X,Z),\qquad X,Y,Z\in T_2,
$$
we obtain for $T_2$ component:
\begin{align*}
&(\bar\nabla_X\bar K)(Y,Z)-(\bar\nabla_Y\bar K)(X,Z)\\
&\qquad=\lambda\bigl(h(Y,Z)\cA X-h(X,Z)\cA Y+h(\cA X,Y)Z-h(\cA Y,X)Z\\
&\hspace{46mm}+h(\cA X,Z)Y-h(\cA Y,Z)X\bigr).
\end{align*}
Since $h(\cA X,Y)=h(\cA Y,X)$ the terms proportional to $Z$ cancel and we get
\begin{align}\label{eq::tangentCodazzi}
&(\bar\nabla_X\bar K)(Y,Z)-(\bar\nabla_Y\bar K)(X,Z)\\
\notag &\qquad=\lambda\bigl(h(Y,Z)\cA X-h(X,Z)\cA Y+h(\cA X,Z)Y-h(\cA Y,Z)X\bigr).
\end{align}
The $e_1$-components of the same Codazzi identity give
\begin{equation}\label{eq::e1Codazzi}
h(\cA X,\bar K(Y,Z))=h(\cA Y,\bar K(X,Z)).
\end{equation}
Using the self-adjointness of $\cA$, the total symmetry of $h(\bar K(\cdot,\cdot),\cdot)$ and \eqref{eq::e1Codazzi} we obtain
\begin{align*}
h([\cA,\bar K_Z]X,W)&=h(\cA\bar K(Z,X),W)-h(\bar K(Z,\cA X),W)\\
&=h(\cA W,\bar K(X,Z))-h(\cA X,\bar K(W,Z))=0
\end{align*}
for every $W\in T_2$. Thus
$$
[\cA,\bar K_Z]=0\qquad\text{for every }Z\in T_2.
$$
In particular, we obtain \eqref{eq::A-K-commute} from Lemma~\ref{lm::CW}.
\par Let $e_2,\ldots,e_n$ be any local $h$-orthonormal frame of $TN$, and let $X\in\X(N)$.  Setting $Y=Z=e_i$ in \eqref{eq::tangentCodazzi} and summing over $i=2,\ldots,n$ we get
\begin{align}\label{eq::NablaKSum-ei}
&\sum_{i=2}^n(\bar\nabla_X\bar K)(e_i,e_i)-\sum_{i=2}^n(\bar\nabla_{e_i}\bar K)(X,e_i)\\
\notag &\qquad=\lambda\sum_{i=2}^n\bigl(h(e_i,e_i)\cA X-h(X,e_i)\cA e_i+h(\cA X,e_i)e_i-h(\cA e_i,e_i)X\bigr)\\
\notag &\qquad=\lambda\bigl((n-1)\cA X-\cA X+\cA X-(\tr\cA) X\bigr) = \lambda\bigl((n-1)\cA X-(\tr\cA) X\bigr).
\end{align}
In the above orthonormal basis we can write:
$$
\bar\nabla_Xe_i=\sum_{j=2}^n\omega_{ij}(X)e_j.
$$
Because the frame is $h$-orthonormal and $\bar\nabla h=0$ we have
$$
\omega_{ij}=-\omega_{ji}
$$
for $i,j=2,\ldots,n$.
Using \eqref{eq::barK-tracefree} we compute
\begin{align*}
0&=\bar\nabla_X\left(\sum_{i=2}^n\bar K(e_i,e_i)\right)\\
&=\sum_{i=2}^n(\bar\nabla_X\bar K)(e_i,e_i)+2\sum_{i=2}^n\bar K(\bar\nabla_Xe_i,e_i)\\
&=\sum_{i=2}^n(\bar\nabla_X\bar K)(e_i,e_i)+2\sum_{i,j=2}^n\omega_{ij}(X)\bar K(e_j,e_i).
\end{align*}
The last double sum vanishes because $\omega_{ij}=-\omega_{ji}$ and $\bar K(e_j,e_i)=\bar K(e_i,e_j)$.  Hence
$$
\sum_{i=2}^n(\bar\nabla_X\bar K)(e_i,e_i)=0.
$$
Since $\bar K$ is symmetric also $\bar\nabla_{e_i}\bar K$ is symmetric and we get
$$
\sum_{i=2}^n(\bar\nabla_{e_i}\bar K)(X,e_i)=\sum_{i=2}^n(\bar\nabla_{e_i}\bar K)(e_i,X).
$$

Now \eqref{eq::NablaKSum-ei} takes the form
\begin{equation}\label{eq::contracted-Kbar-application}
\sum_{i=2}^n(\bar\nabla_{e_i}\bar K)(e_i,X)=-\lambda\bigl((n-1)\cA-(\tr\cA)\id\bigr)X.
\end{equation}
By Lemma~\ref{lm::operatorA}
\begin{equation}\label{eq::A-scalar}
\cA=\frac{\tr\cA}{n-1}\id.
\end{equation}
Set
$$
b:=-\frac{\tr\cA}{n-1}.
$$
The identity \eqref{eq::A-second-form} gives
\begin{equation}\label{eq::umbilic}
h(\widehat\nabla_XY,e_1)=-b h(X,Y),\qquad\widehat\nabla_Xe_1=bX,\qquad X,Y\in T_2.
\end{equation}
Thus the leaves of $T_2$ are totally umbilical, with mean curvature vector $H=-be_1$. In consequence
$$
\widehat\nabla_XH=-X(b)e_1-b\widehat\nabla_X e_1=-X(b)e_1-b^2X.
$$
In order to prove that $T_2$ is spherical it is enough to show that $X(b)=0$ for $X\in T_2$.
Recall (see \cite[Chapter~II, \S9, formula~(9.2)]{NomSas}) that for an
affine hypersurface we have
$$
\widehat R(X,Y)Z=\frac{1}{2}\bigl\{h(Y,Z)SX-h(X,Z)SY+h(SY,Z)X-h(SX,Z)Y\bigr\}-[K_X,K_Y]Z
$$
where $S$ is the affine shape operator.
Since in our case $S=\epsc\id$, this simplifies to
$$
\widehat R(X,Y)Z=\epsc\bigl(h(Y,Z)X-h(X,Z)Y\bigr)-[K_X,K_Y]Z.
$$
Applying the above formula to $X,Y\in T_2$ and $e_1$ we obtain
$$
\widehat R(X,Y)e_1=-[K_X,K_Y]e_1=-\lambda\bigl(K(X,Y)-K(Y,X)\bigr)=0.
$$
On the other hand, \eqref{eq::umbilic} and the integrability of $T_2$ gives
$$
\widehat R(X,Y)e_1=X(b)Y-Y(b)X.
$$
Since $\dim T_2=n-1\ge2$,
\begin{equation}\label{eq::bZero}
X(b)=0\qquad\text{for every }X\in T_2.
\end{equation}
The distribution $T_1$ is integrable and totally geodesic, and $T_2$ is integrable and spherical thus Theorem~\ref{tw::Hiepko} gives the local warped product
$$
(M,h)=I\times_\rho N.
$$
\end{proof}
Now we can prove the main theorem of this paper.

\begin{proof}[Proof of Theorem~\ref{tw::main}]
If $\beta\neq2\alpha$, the result is exactly case~(1) of the
classification theorem of Cao and Wang \cite{CaoWang2026}.
\par Assume now that $\beta=2\alpha$. By
Proposition~\ref{stw::strengthenedProp4}, the warped-product structure
holds for every $n\ge3$, without any assumption on $\bar K$.
By Lemma~\ref{lm::CW}, \eqref{eq::umbilic}, and
\eqref{eq::bZero}, we have
$$
\mu=2\lambda,\qquad \lambda=\const,\qquad\widehat\nabla_{e_1}e_1=0,
$$
and, for $X\in T_2$,
$$
\widehat\nabla_Xe_1=bX,\qquad X(b)=0.
$$
Moreover, for $X,Y\in T_2$,
$$
\widehat\nabla_XY=\bar\nabla_XY-bh(X,Y)e_1,\qquad K(X,Y)=\bar K(X,Y)+\lambda h(X,Y)e_1,
$$
where $\bar K$ is trace-free.
\par We may choose a local coordinate $t$ on the one-dimensional factor such
that $e_1=\partial_t$. Since $\mu=2\lambda$ and $\lambda$ is constant,
$$
\lambda'=0=(\mu-2\lambda)b.
$$
The same curvature computation as in \cite{CaoWang2026} gives
$$
\widehat R(e_1,X)e_1=\bigl(e_1(b)+b^2\bigr)X=\bigl(\mu\lambda-\lambda^2-\epsc\bigr)X,
$$
for $X\in T_2$. Hence
$$
b'=-b^2-\epsc+\mu\lambda-\lambda^2.
$$
Set
$$
q:=\lambda^2-b^2-\epsc.
$$
Then
\begin{align*}
q'&=2\lambda\lambda'-2bb'\\
&=2b\bigl(\lambda\mu-2\lambda^2+b^2+\epsc-\mu\lambda+\lambda^2\bigr)=-2bq.
\end{align*}
Thus $q$ satisfies $q'=-2bq$. If $q$ vanishes at one point, the
uniqueness theorem gives $q\equiv0$ on the interval under
consideration. Otherwise, after shrinking the interval if necessary,
$q$ is nowhere zero. Therefore, locally,
$$
q\equiv 0 \qquad\text{or}\qquad q\neq 0.
$$
Choose positive functions $\kappa,\tau$ and a function $\phi$ satisfying
$$
\kappa'=-(b+\lambda)\kappa,\qquad\tau'=(\lambda+b-\mu)\tau,\qquad\phi'=\kappa\tau^{-1}.
$$
Following the construction from \cite{CaoWang2026} we define
$$
B_0=\tau\bigl(x_*(e_1)-(b+\lambda)x\bigr),\qquad A_0=\kappa x-\phi B_0.
$$
Using the Gauss formula and the relations above, we get
$$
D_{e_1}x_*(e_1)=\mu x_*(e_1)-\epsc x,
$$
$$
D_Xx_*(e_1)=(b+\lambda)x_*(X),\qquad X\in T_2
$$
and
$$
D_Xx_*(Y)=x_*\bigl(\bar\nabla_XY+\bar K(X,Y)\bigr)+\bigl((\lambda-b)x_*(e_1)-\epsc x\bigr)h(X,Y),\qquad X,Y\in T_2.
$$
The same computation as in \cite{CaoWang2026} gives
$$
D_XB_0=0\qquad\text{for every }X\in TM,\qquad D_{e_1}A_0=0.
$$
Thus $B_0$ is a constant vector and $A_0$ depends only on the variable
$p\in N$.
\par The remaining computations are the same as in the proof of
Proposition~5 of \cite{CaoWang2026}. We use the decomposition
$$
K(X,Y)=\bar K(X,Y)+\lambda h(X,Y)e_1
$$
and the trace-free property of $\bar K$ and obtain that $A_0$ defines a proper affine hypersphere
when $q\neq0$, and an improper affine hypersphere when $q\equiv0$.
\par After a centroaffine transformation, we may write the constant vector
$B_0$ as
$$
e=(0,\ldots,0,1)
$$
and the affine hypersphere $A_0$ as $g(p)$, where $g\colon N\to\R^n\times\{0\}$ is proper or
$g\colon N\to\{1\}\times\R^n$ is improper.
Finally we obtain
$$
x=\kappa^{-1}A_0+\kappa^{-1}\phi B_0=\gamma_1(t)e+\gamma_2(t)g(p),
$$
where
$$
\gamma_1=\kappa^{-1}\phi,\qquad\gamma_2=\kappa^{-1}.
$$
Together with the equations for $\kappa,\tau,\phi,\lambda$, and $b$
obtained above, this gives exactly the ODE system from
Theorem~\ref{tw::main}. This completes the proof.
\end{proof}

\section*{Acknowledgements}
This research was financed by the Ministry of Science and Higher Education of the Republic of Poland.

\end{document}